\providecommand{\U}[1]{\protect\rule{.1in}{.1in}}
\newtheorem{theorem}{Theorem}
\newtheorem{corollary}[theorem]{Corollary}
\newtheorem{definition}[theorem]{Definition}
\newtheorem{example}[theorem]{Example}
\newtheorem{lemma}[theorem]{Lemma}
\newtheorem{proposition}[theorem]{Proposition}
\newtheorem{remark}[theorem]{Remark}
\newenvironment{proof}[1][Proof]{\noindent\textbf{#1.} }{$\hfill\Box$}
\begin{document}

\title{A quasilinear problem in two parameters depending on the gradient
{\thanks{2010 Mathematics Subject Classification: 35B09, 35J66, 35J70, 35J92}}}
\author{\textbf{{\large H. Bueno and G. Ercole}}\thanks{The authors were supported in
part by FAPEMIG and CNPq, Brazil.}\\\textit{{\small Departamento de Matem\'{a}tica}}\\\textit{{\small Universidade Federal de Minas Gerais}}\\\textit{{\small Belo Horizonte, Minas Gerais, 30.123.970, Brazil}}\\\textit{{\small e-mail: hamilton@mat.ufmg.br}}, \thinspace
\textit{{\small grey@mat.ufmg.br}}}
\maketitle

\begin{abstract}
The existence of positive solutions is considered for the Dirichlet problem
\[
\left\{
\begin{array}
[c]{rcll}%
-\Delta_{p}u & = & \lambda\omega_{1}(x)\left\vert u\right\vert ^{q-2}%
u+\beta\omega_{2}(x)\left\vert u\right\vert ^{a-1}u|\nabla u|^{b} & \text{in
}\Omega\\
u & = & 0 & \text{on }\partial\Omega,
\end{array}
\right.
\]
where $\lambda$ and $\beta$ are positive parameters, $a$ and $b$ are positive
constants satisfying $a+b\leq p-1$, $\omega_{1}(x)$ and $\omega_{2}(x)$ are
nonnegative weights and $1<q\leq p$. The homogeneous case $q=p$ is handled by
making $q\rightarrow p^{-}$ in the sublinear case $1<q<p,$ which is based on
the sub- and super-solution method. The core of the proof of this problem is
then generalized to the Dirichlet problem $-\Delta_{p}u=f(x,u,\nabla u)$ in
$\Omega$, where $f$ is a nonnegative, continuous function satisfying simple,
geometrical hypotheses. This approach might be considered as a unification of
arguments dispersed in various papers, with the advantage of handling also
nonlinearities that depend on the gradient, even in the $p$-growth case. It is
then applied to the problem $-\Delta_{p}u=\lambda\omega(x)u^{q-1}\left(
1+|\nabla u|^{p}\right)  $ with Dirichlet boundary conditions in the domain
$\Omega$.

\end{abstract}

%{2010 Mathematics Subject Classification: 35B09, 35J66, 35J70, 35J92}

\textit{keywords:} $p$-Laplacian, positive solution, nonlinearity depending on
the gradient, $p$-growth, sub- and super-solution method.

\section{Introduction}

Existence of positive solutions for $p$-Laplacian problems depending on the
gradient has been attracting considerable interest among researchers of
elliptic PDE's, but no general method to deal with this kind of problem has
been established. The dependence on the gradient requests a priori bounds on
the solutions and in their derivatives, what brings additional difficulties.
In general, this problem is not suitable for variational techniques and thus
topological methods (as fixed-point or degree results) and/or blow-up
arguments are normally employed to solve it (\cite{figubilla,iturriaga,ruiz}).

In the case of the Laplacian (i.e., $p=2$) an interesting combination of
variational and topological techniques (precisely, a combination of the
mountain pass geometry with the contraction lemma) was first used in
\cite{djairo} and has motivated some works (e.g., \cite{ubilla}). Basically,
an iteration process is constructed by freezing the gradient in each iteration
and (variationally) solving the resulting problem. Then, Lipschitz hypotheses
in the variables $u$ and $v$ are made on $f(x,u,v)$ in order to guarantee the
convergence in $W_{0}^{1,2}(\Omega)$ of the obtained sequence of solutions.
The same approach for the $p$-Laplacian with $p>2$ seems to be not directly
adaptable, since the natural extension of the Lipschitz conditions to obtain
the convergence of the iterated solutions leads $f$ to be a H\"{o}lder
function with exponent greater than $1$ in both variables $u$ and $v$.

In \cite{figubilla}, the authors discuss the existence of positive solutions
for quasilinear elliptic equations in annular domains in $\mathbb{R}^{N}$ and,
in particular, the radial Dirichlet problem in annulus. (Therefore, the
problem is transformed into an ordinary differential equation.) In that paper,
$f$ satisfies a superlinear condition at $0$ and a local superlinear condition
at $+\infty$. The growth of the nonlinearity $f$ in relation to the gradient
is controlled by a Bernstein-Nagumo condition and a local homogeneity type
condition in the second variable. The existence of solutions is guaranteed by
applying the Krasnosel'skii Fixed Point Theorem for mappings defined in cones.

In this paper we consider the existence of positive solutions for the
Dirichlet problem in two parameters in a smooth, bounded domain $\Omega
\subset\mathbb{R}^{N}$ ($N>1$):
\begin{equation}
\left\{
\begin{array}
[c]{rcll}%
-\Delta_{p}u & = & \lambda\omega_{1}(x)\left\vert u\right\vert ^{q-2}%
u+\beta\omega_{2}(x)\left\vert u\right\vert ^{a-1}u|\nabla u|^{b} & \text{in
}\Omega\\
u & = & 0 & \text{on }\partial\Omega,
\end{array}
\right.  \label{two}%
\end{equation}
where $\Delta_{p}u:=\operatorname{div}\left(  |\nabla u|^{p-2}\nabla u\right)
$ is the $p$-Laplacian operator for $p>1$, $\lambda$ and $\beta$ are positive
parameters, $a$ and $b$ are positive constants satisfying $a+b\leq p-1$,
$\omega_{1}(x)$ and $\omega_{2}(x)$ are nonnegative weights and $1<q\leq p$.

By applying the sub- and super-solution method for problems involving the
gradient (\cite{Boccardo,mabel}), we treat first the sublinear case $1<q<p$
and, in Theorem \ref{taux}, we prove the existence of at least one positive
solution $u\in C^{1,\tau}(\overline{\Omega})$.

The case $q=p$ is more demanding and our approach makes $q\rightarrow p^{-}$
in the solution obtained in Theorem \ref{taux}. Our result is stated in
Theorem \ref{mauxt}.

As a consequence of the study of problem (\ref{two}), we realized that the
core of the proof of Theorem \ref{taux} could be generalized to handle the
Dirichlet problem
\begin{equation}
\left\{
\begin{array}
[c]{rcll}%
-\Delta_{p}u & = & f(x,u,\nabla u) & \text{in }\Omega\\
u & = & 0 & \text{on }\partial\Omega,
\end{array}
\right.  \label{prob}%
\end{equation}
$f$ is a nonnegative, continuous function satisfying simple hypotheses and
$\Omega\subset\mathbb{R}^{N}$ is a bounded, smooth domain, $N>1$.

Our proof of existence of a positive solution for (\ref{prob}) might be
considered as a unification of arguments dispersed in various papers, but it
also handles nonlinearities that depend on the gradient in the $p$-growth
case, which is remarkable. As in our treatment of problem (\ref{two}), it is a
consequence of the sub- and super-solution method for quasilinear equations
involving dependence on the gradient. To apply the method, a condition of the
Bernstein-Nagumo type is always assumed; in \cite{Boccardo} the nonlinearity
$f$ is a Carath\'{e}odory function (i.e., measurable in the $x$-variable and
continuous in the $(u,v)$-variable) such that

\begin{enumerate}
\item[(H1)] $f(x,u,v)\leq C(|u|)(1+|v|^{p})\quad(u,v)\in\mathbb{R}%
\times\mathbb{R}^{N},\ { a.e. }\ x\in\Omega$ for some increasing function
$C\colon[0,\infty]\to[0,\infty]$.\label{Boccardo}
\end{enumerate}

This assumption is merely technical and can be chosen as any hypothesis that
guarantees the existence of a solution of (\ref{prob}) from an ordered sub-
and super-solution pair. We have taken for granted the growth condition (H1),
following \cite{Boccardo}. Since this condition is also related to the
regularity of a weak solution, it is by no means surprisingly that assumptions
of the same type are also found in papers that do not apply the sub- and
super-solution method.

Most of the articles dealing with sub- and super-solution method for problems
with the $p$-Laplacian and nonlinearity depending on the gradient aim to
improve the method itself and applications of the method are rare. One of the
exceptions is the work of Grenon \cite{Grenon}, where problem (\ref{prob}) --
with different hypothesis -- is solved by analyzing two symmetrized problems.
From the existence of two nontrivial super-solutions $V_{1}$ and $V_{2}$ for
those problems follows the existence a super-solution $U_{1}$ and a
sub-solution $U_{2}$ for $(\ref{prob})$, with $U_{2} \leq U_{1}$. (The article
\cite{BEFZ} also applies the sub- and super-solution method for a nonlinearity
$f$ depending on the gradient. However, the obtention of a sub-solution
follows a quite different method.)

The work \cite{ILS} also applies the sub- and super-solution method and deals
with a problem that depends on the gradient of the solution in a special form.
However, by applying a change of variable, the problem is transformed into one
that does not depends on the gradient and the usual method of sub- and
super-solution is then applied. (See also Example \ref{ex2}.)

Contrasting with the majority of papers on the subject -- in which many
hypotheses are normally assumed on the nonlinearity -- our assumptions on $f$
are very simple. Besides (H1), they consist of hypotheses (H2) and (H3), which
we now describe.

Let $\omega\neq0$ be a continuous, nonnegative function and $\lambda_{1}$ be
the first eigenvalue of the Dirichlet problem for $-\Delta_{p}$ with weight
$\omega$ in the domain $\Omega$, that is, $\lambda_{1}$ is the least positive
number such that
\begin{equation}
\label{1eigenv}\left\{
\begin{array}
[c]{rcll}%
-\Delta_{p}u & = & \lambda_{1}\omega u^{p-1} & \text{in }\Omega\\
u & = & 0 & \text{on }\partial\Omega,
\end{array}
\right.
\end{equation}
for some $u\in W^{1,p}_{0}(\Omega)$, $u>0$ in $\Omega$. Our assumptions on the
nonlinearity $f$ depend on the chosen weight function $\omega$.

Our assumption (H2) is a standard sublinear condition: near $u=0^{+}$ the
values of the nonlinearity $f(x,u,v)$ must be greater than $\lambda_{1}%
u^{p-1}\omega(x)$. We show that this assumption produces, for each
$\epsilon>0$, a positive sub-solution $u_{\epsilon}$, whose sup norm becomes
small when $\epsilon$ decreases. If the nonlinearity $f$ depends only on
$(x,u)$, this is a well known fact; to our knowledge, if the nonlinearity $f$
depends on $(x,u,\nabla u)$, this fact was overlooked in previous papers.

The last assumption, (H3), is that $f$, restricted to a suitable compact set,
is bounded from above by a special multiple of the weight $\omega$. This
approach follows \cite{bez}, where (\ref{prob}) was also independent of the
gradient. We show that this hypothesis produces a super-solution $U$ for
(\ref{prob}) with $u_{\epsilon}<U$, for $\epsilon$ small enough.

The paper is organized as follows. In Section \ref{Basic}, we summarize some
results about the $p$-Laplacian with Dirichlet boundary conditions.

In Section \ref{tp} we prove our results about problem (\ref{two}), that is,
Theorems \ref{taux} and \ref{probp}.

In Section \ref{main} we state and prove our main result about the abstract
problem (\ref{prob}), that is, Theorem \ref{maint}.

In the special case $f(x,u,v)=\omega(x)g(u,|v|)$, with $\omega>0$ continuous,
hypotheses (H2)-(H3) are interpreted in Section \ref{App}. There, we also
consider problem (\ref{prob}) for the parameter dependent nonlinearity
\[
f(x,u,v)=\lambda\omega(x)u^{q-1}\left(  1+|\nabla u|^{p}\right)  .
\]
We prove the existence of $\lambda^{*}>0$ such that, in this case,
(\ref{prob}) has a solution for all $\lambda\in[0,\lambda^{*}]$. Observe that
no restriction on the value of $p>1$ or on the dimension $N$ is assumed.

In this example, our result follows directly from Theorem \ref{maint}, but the
arguments are standard and can be found, for nonlinearities of the type
\[
\lambda u^{q-1}+g(u)
\]
and $1<q<p$, \textit{e. g.} in the articles \cite{GuoZhang,GuoWebb1}. (With a
different method, the same problem is considered in \cite{GMA}.)

\section{Preliminaries}

\label{Basic}

In this section we recall some basic results in the theory of the
$p$-Laplacian equation with Dirichlet boundary condition and present technical
results that will be used in the rest of the paper. Let $\Omega$ be a bounded,
smooth domain in $\mathbb{R}^{N}$, $N>1$.

\begin{definition}
\label{defsubsupsol}Let $f\colon\Omega\times\mathbb{R}\times\mathbb{R}^{N}$ be
a Carath\'{e}odory function. A function $u\in W^{1,p}(\Omega)\cap\,L^{\infty
}(\Omega)$ is called a solution $($sub-solution, super-solution$)$ of
\begin{equation}
{\displaystyle\left\{
\begin{array}
[c]{rcll}%
-\Delta_{p}u & = & f(x,u,\nabla u) & \mbox{in}\ \Omega,\\
u & = & 0 & \mbox{on}\ \partial\Omega,
\end{array}
\right.  } \nonumber\label{p1}%
\end{equation}
if
\[
\displaystyle{\int_{\Omega}|\nabla u|^{p-2}\nabla u\cdot\nabla\phi
\,dx=\int_{\Omega}f(x,u,\nabla u)\phi\,dx\ \ (\leq0,\ \geq0),}%
\]
for all $\phi\in C_{0}^{\infty}(\Omega)$, $(\phi\geq0$ in $\Omega$ in the case
of a sub- or super-solution$)$ and
\[
u=0\ (\leq0,\ \geq0)\ \ \text{on}\ \ \partial\Omega.
\]

A pair $(\underline{u},\overline{u})$ of sub- and super-solution is
\textrm{ordered} if $\underline{u}\leq\overline{u}$ a.e.\goodbreak

\end{definition}

We remark that, if the nonlinearity $f$ satisfies (H1), then
\begin{align*}
\int_{\Omega}\big|f(x,u,\nabla u)\phi\big|dx  &  \leq C(\Vert u\Vert_{\infty
})\int_{\Omega}(1+|\nabla u|^{p})|\phi|\,dx\\
&  =C(\Vert u\Vert_{\infty})\left(  \int_{\Omega}|\phi|\,dx+\int_{\Omega
}|\nabla u|^{p}|\phi|\,dx\right)  <\infty,
\end{align*}
since $\phi\in C_{0}^{\infty}$ and $u\in W^{1,p}(\Omega)\cap\,L^{\infty
}(\Omega)$.\vspace{0.3cm}

The following Theorem is a simpler version of a result of Lieberman proved in
\cite[Theorem 1]{LIEBERMAN} by using techniques developed by DiBenedetto
\cite{DiBenedetto} and Tolksdorf \cite{Tolks}.

\begin{theorem}
$\cite[Thm1]{LIEBERMAN}$ Suppose that $u\in W^{1,p}\left(  \Omega\right)  $ is
a weak solution of the Dirichlet problem \label{Liebb}%
\[
\left\{
\begin{array}
[c]{rcll}%
-\Delta_{p}u & = & f(x,u,\nabla u) & \text{ \ \ in }\Omega,\\
u & = & 0 & \text{\ \ \ on }\partial\Omega
\end{array}
\right.
\]
where $f$ is a Carath\'{e}odory function such that%
\[
\left\vert f(x,\xi,\eta)\right\vert \leq\Lambda\left(  1+\left\vert
\eta\right\vert ^{p}\right)  \text{ \ for all }(x,\xi,\eta)\in\Omega
\times\lbrack-M,M]\times\mathbb{R}^{N}%
\]
for positive constants $\Lambda$ and $M.$

If $\left\Vert u\right\Vert _{\infty}\leq M,$ then there exists $0<\alpha<1,$
depending only on $\Lambda,$ $p$ and $N$ such that $u\in C^{1,\alpha}\left(
\overline{\Omega}\right)  ;$ moreover,
\[
\left\Vert u\right\Vert _{1,\alpha}\leq C
\]
where $C$ is a positive constant that depends only on $\Lambda,$ $p,$ $N$ and
$M.$
\end{theorem}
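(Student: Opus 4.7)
The plan is to combine three well-established ingredients: (i) an exponential test-function trick to eliminate the natural-growth dependence on the gradient, (ii) DiBenedetto's interior $C^{1,\alpha}$ estimates for $p$-Laplace-type equations with bounded right-hand side, and (iii) Tolksdorf's boundary regularity technique based on barriers.

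First I would upgrade the pointwise inequality $|f(x,u,\nabla u)|\le\Lambda(1+|\nabla u|^{p})$ to a quantitative $L^{\infty}$ bound on $\nabla u$. Since $\|u\|_{\infty}\le M$, one can test the weak formulation with functions of the form $\phi=\zeta^{p}\,e^{\gamma u}\,v_{k}$, where $\zeta$ is a cut-off, $v_{k}$ is a truncation of a suitable power of $|\nabla u|$, and $\gamma>0$ is chosen large depending only on $\Lambda$ and $M$ so that the contribution of the $|\nabla u|^{p}$ term on the right is absorbed by the leading part on the left. Iterating the resulting Caccioppoli-type inequality in the spirit of De~Giorgi/Moser should produce a bound $\|\nabla u\|_{\infty}\le K$ with $K=K(\Lambda,p,N,M)$.

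Once $\nabla u$ is bounded, the equation reduces to $-\Delta_{p}u=g$ with $\|g\|_{\infty}\le\Lambda(1+K^{p})$. On interior balls, DiBenedetto's theorem directly supplies $u\in C^{1,\alpha}_{\mathrm{loc}}(\Omega)$ with a quantitative estimate depending only on $N,p,\Lambda,M$. At the boundary, I would locally flatten $\partial\Omega$ by a $C^{1,\beta}$-diffeomorphism (which perturbs the $p$-Laplace structure only by lower-order terms with controlled coefficients), construct pointwise barriers comparing $u$ with solutions of the pure $p$-Laplace equation that vanish on a hyperplane, and apply Tolksdorf's oscillation-decay argument to control the normal derivative near $\partial\Omega$. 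A finite cover of $\overline{\Omega}$ by interior balls and boundary charts then glues the interior and boundary pieces into a global $C^{1,\alpha}(\overline{\Omega})$-bound depending only on $\Lambda,p,N,M$.

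The main obstacle is the first step: extracting an $L^{\infty}$ bound on $\nabla u$ whose constant depends only on $\Lambda,p,N,M$, and not on any finer norm of $u$. The exponential test function is the classical device for absorbing natural growth, but the iteration must be arranged carefully — using the assumption $\|u\|_{\infty}\le M$ to freeze the weight $e^{\gamma u}$ between two positive constants, and choosing the truncation exponents to match the $p$-Sobolev scaling — so that no hidden dependence on higher norms enters the final constant. This is precisely the point at which the argument for $p\ne 2$ genuinely differs from the linear-growth case and where DiBenedetto's and Tolksdorf's techniques need to be fed with uniformly bounded data before they can be applied.
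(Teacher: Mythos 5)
There is an important mismatch of expectations here: the paper does not prove this statement at all. It is quoted (in simplified form) from Lieberman \cite[Theorem 1]{LIEBERMAN}, and the paper only records that Lieberman's proof uses the interior $C^{1,\alpha}$ theory of DiBenedetto and techniques of Tolksdorf. So your plan is pointed at the right literature, but judged as a proof it is a roadmap with the hard steps asserted rather than carried out, and the assertions do not quite run as written. The crucial first step, a bound $\Vert\nabla u\Vert_{\infty}\leq K(\Lambda,p,N,M)$, cannot be obtained by testing the weak formulation with $\zeta^{p}e^{\gamma u}v_{k}$ where $v_{k}$ truncates a power of $|\nabla u|$: such test functions are not admissible for a $W^{1,p}$ weak solution of a degenerate equation, since the Caccioppoli inequality for $|\nabla u|$ requires differentiating the equation, which in turn requires regularizing the operator (e.g.\ replacing $|\nabla u|^{p-2}$ by $(\varepsilon+|\nabla u|^{2})^{(p-2)/2}$), proving estimates uniform in $\varepsilon$, and passing to the limit. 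Moreover the exponential weight absorbs the natural-growth term only one-sidedly (a sign obstruction: with $g'=e^{\Lambda u/(p-1)}$ one controls $-\Delta_{p}g(u)$ from above but not from below with the same substitution), so a two-sided gradient bound needs the full Ladyzhenskaya--Uraltseva/Bernstein-type iteration; and a \emph{global} bound additionally needs boundary gradient estimates via barriers, which your sketch does not supply.

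The second genuine gap is at the boundary $C^{1,\alpha}$ step. Tolksdorf's oscillation-decay estimates are interior results; after flattening $\partial\Omega$ the equation acquires $x$-dependent degenerate structure, and establishing H\"older continuity of $\nabla u$ up to the flat boundary is precisely the new content of Lieberman's paper, not a routine application of an existing interior lemma. In other words, your outline essentially proposes to re-derive Lieberman's theorem, and the two places where you defer the work (uniform global gradient bound with constants depending only on $\Lambda,p,N,M$; boundary H\"older gradient estimate) are exactly the places where the cited theorem does the work. For the purposes of this paper the correct ``proof'' is the citation itself; if you want a self-contained argument, you would need to supply the regularization and differentiation step, the boundary barrier construction, and the boundary oscillation-decay argument in detail.
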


We now state, in a version adapted to our paper, the result that give basis to
the method of sub- and super-solution for equations like (\ref{p1}). The
existence part is a consequence of Thm. 2.1 of Boccardo, Murat and Puel
\cite{Boccardo}. The regularity part follows from Theorem \ref{Liebb}, while
the minimal and maximal solutions are consequence of Zorn's Lemma, as proved
in Cuesta Leon \cite{mabel}:

\begin{theorem}
\label{TBoc} Let $f\colon\Omega\times\mathbb{R}\times\mathbb{R}^{N}%
\to\mathbb{R}$ be a Carath\'{e}odory function satisfying $(\mathrm{H}1)$.
Suppose that $(\underline{u},\overline{u})$ is an ordered pair of sub- and
super-solution for the problem $(\ref{p1})$.

Then, there exists a minimal solution $u$ and a maximal solution $v$ of
$(\ref{p1})$, both in $C^{1,\tau}\left(  \overline{\Omega}\right)  $
$(0<\tau<1)$, such that $\underline{u}\leq u\leq v\leq\overline{u}$.
\end{theorem}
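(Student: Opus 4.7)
The plan is to assemble three ingredients that are essentially already available in the literature: the Boccardo--Murat--Puel existence result (Thm.~2.1 of \cite{Boccardo}) for the sub- and super-solution method under hypothesis (H1), the Lieberman regularity Theorem~\ref{Liebb} to upgrade any trapped solution to $C^{1,\tau}(\overline{\Omega})$, and Zorn's lemma on the set of trapped solutions to extract the extremal ones, as carried out in \cite{mabel}.

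First, setting $M := \max(\|\underline{u}\|_{\infty}, \|\overline{u}\|_{\infty})$, I would invoke Thm.~2.1 of \cite{Boccardo} directly on the ordered pair $(\underline{u}, \overline{u})$. Because hypothesis (H1) holds and the pair is ordered, this produces at least one weak solution $w \in W^{1,p}_{0}(\Omega) \cap L^{\infty}(\Omega)$ of $(\ref{p1})$ with $\underline{u} \le w \le \overline{u}$ a.e., and in particular $\|w\|_{\infty} \le M$.

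The regularity upgrade is then essentially immediate: on $\Omega \times [-M,M] \times \mathbb{R}^{N}$ hypothesis (H1) reads $|f(x,\xi,\eta)| \le C(M)(1+|\eta|^{p})$, which is precisely the structural assumption of Theorem~\ref{Liebb} with $\Lambda := C(M)$. Hence every weak solution of $(\ref{p1})$ lying pointwise between $\underline{u}$ and $\overline{u}$ belongs to $C^{1,\alpha}(\overline{\Omega})$ for some $\alpha \in (0,1)$, and satisfies a uniform estimate $\|w\|_{1,\alpha} \le C^{*}$ depending only on $C(M), p, N, M$.

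For the extremal solutions, I would denote by $\mathcal{S}$ the family of all trapped solutions, partially ordered by pointwise a.e.\ comparison, and invoke Zorn's lemma. Given a chain $\{w_{\gamma}\}_{\gamma \in \Gamma} \subset \mathcal{S}$, the uniform $C^{1,\alpha}$ bound from the previous paragraph together with the compact embedding $C^{1,\alpha}(\overline{\Omega}) \hookrightarrow C^{1}(\overline{\Omega})$ and the separability of $C^{1}(\overline{\Omega})$ permits the extraction of a countable cofinal decreasing subfamily whose pointwise infimum $w^{*} := \inf_{\gamma} w_{\gamma}$ coincides with its $C^{1}$-limit. Passing to the limit in the weak formulation---using uniform convergence of $|\nabla w_{\gamma}|^{p-2}\nabla w_{\gamma}$ and continuity of $f$---shows $w^{*} \in \mathcal{S}$, providing the lower bound needed by Zorn. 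A minimal element $u$ follows, and the dual argument applied to supremums yields the maximal element $v$, so that $\underline{u} \le u \le v \le \overline{u}$.

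The main obstacle I expect is the passage to the limit along an arbitrary chain rather than a sequence: this is precisely where the uniform Lieberman estimate is indispensable, since it is what turns an abstract order-theoretic chain into a relatively compact family in $C^{1}(\overline{\Omega})$ and allows the countable cofinality reduction. Without this uniform regularity, the infimum of a chain would only be defined pointwise and there would be no mechanism to pass it through the nonlinear operator $-\Delta_{p}$.
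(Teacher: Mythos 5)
Your proposal is correct and follows essentially the same route as the paper, which itself only cites the three ingredients you assemble: existence of a trapped solution from Thm.~2.1 of Boccardo--Murat--Puel under (H1), the $C^{1,\tau}(\overline{\Omega})$ upgrade via Lieberman's Theorem~\ref{Liebb} with $\Lambda=C(M)$, and the extremal solutions via Zorn's lemma as in Cuesta Leon \cite{mabel}. Your sketch of the chain/compactness argument is a reasonable filling-in of the step the paper delegates to \cite{mabel} (where one also upgrades the order-theoretic minimal element to the least trapped solution), so no substantive divergence from the paper's argument.
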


(By \emph{minimal} and \emph{maximal} solution of $(\ref{p1})$ we mean that,
if $w$ is a solution of this problem and $\underline{u}\leq w\leq\overline{u}%
$, then $u\leq w\leq v$.)

\section{A problem involving two parameters}

\label{tp} In this section we consider the existence of positive solutions for
the following problem in two parameters in the smooth, bounded domain
$\Omega\subset\mathbb{R}^{N}$ ($N>1$):%

\begin{equation}
\left\{
\begin{array}
[c]{rcll}%
-\Delta_{p}u & = & \lambda\omega_{1}(x)\left\vert u\right\vert ^{q-2}%
u+\beta\omega_{2}(x)\left\vert u\right\vert ^{a-1}u|\nabla u|^{b} & \text{in
}\Omega\\
u & = & 0 & \text{on }\partial\Omega,
\end{array}
\right.  \label{twotwo}%
\end{equation}
where $\lambda$ and $\beta$ are positive parameters, $a$ and $b$ are positive
constants satisfying $a+b\leq p-1$, $\omega_{1}(x)$ and $\omega_{2}(x)$ are
nonnegative weights and $1<q\leq p$.

In the case $q=p$, we remark that the problem (\ref{twotwo}) is
\emph{homogeneous}, in the sense that if $u$ solves it for fixed parameters
$\lambda$ and $\beta$, then $ku$ is also a solution, for any constant $k$
(note that we are assuming $a+b=p-1$).

In our approach to problem (\ref{twotwo}) the solution $\phi$ of the torsional
creep problem
\begin{equation}
\left\{
\begin{array}
[c]{rcll}%
-\Delta_{p}\phi & = & \omega & \text{in }\Omega\\
\phi & = & 0 & \text{on }\partial\Omega.
\end{array}
\right.  \label{pweight}%
\end{equation}
plays an important role. It is well-known that $\phi\in C^{1,\tau}\left(
\overline{\Omega}\right)  $ and $\phi>0$ in $\Omega$.

We define the positive constants $\alpha=\Vert\phi\Vert_{\infty}^{1-p}$ and
$\mu=\Vert\nabla\phi\Vert_{\infty}/\Vert\phi\Vert_{\infty}$, where $\Vert
\cdot\Vert_{\infty}$ stands for the sup-norm and $\phi$ stands for the
solution of the torsional creep problem (\ref{pweight}) with
\[
\omega(x)=\max\left\{  \omega_{1}(x),\omega_{2}(x)\right\}  .
\]

Let $\lambda_{1}$ and $u_{1}$ be, respectively, the first eigenvalue and
positive eigenfunction associated to the weight $\omega_{1}$, with $\Vert
u_{1}\Vert_{\infty}=1$. Thus,%
\[
\left\{
\begin{array}
[c]{rcll}%
-\Delta_{p}u_{1} & = & \lambda_{1}\omega_{1}u_{1}^{p-1} & \text{in }\Omega\\
u_{1} & = & 0 & \text{on }\partial\Omega.
\end{array}
\right.
\]

We begin by considering problem (\ref{twotwo}) in the case $1<q<p$. In this
sublinear case, bounds for the sub-solution and ordering of the sub- and
super-solution pair follows from a simple lemma.

\begin{lemma}
\label{ltwo}It hold $\alpha\leq\lambda_{1}$ and
\[
\lambda_{1}\left(  \dfrac{\lambda}{\lambda_{1}}\right)  ^{\frac{p-1}{p-q}}%
\leq\alpha\left(  \dfrac{\lambda}{\alpha}\right)  ^{\frac{p-1}{p-q}}.
\]

\end{lemma}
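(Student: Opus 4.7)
I would split the lemma into its two assertions. For $\alpha\le\lambda_{1}$, the plan is to compare $\phi$ with the normalized first eigenfunction $u_{1}$ by means of the weak comparison principle for $-\Delta_{p}$. The key observation is that, because $\|u_{1}\|_{\infty}=1$, we have $0\le u_{1}\le 1$ and therefore
\[
\omega(x)\;=\;\max\{\omega_{1}(x),\omega_{2}(x)\}\;\ge\;\omega_{1}(x)\;\ge\;\omega_{1}(x)u_{1}^{p-1}.
\]
Setting $v:=\lambda_{1}^{1/(p-1)}\phi$, the $(p-1)$-homogeneity of $-\Delta_{p}$ and the above inequality give
\[
-\Delta_{p}v \;=\; \lambda_{1}\omega \;\ge\; \lambda_{1}\omega_{1}u_{1}^{p-1} \;=\; -\Delta_{p}u_{1}\quad\text{in }\Omega,
\]
with $v=u_{1}=0$ on $\partial\Omega$. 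The weak comparison principle for $-\Delta_{p}$ yields $v\ge u_{1}$ in $\Omega$, whence $\lambda_{1}^{1/(p-1)}\|\phi\|_{\infty}\ge\|u_{1}\|_{\infty}=1$. Raising to the power $p-1$ and rearranging gives $\lambda_{1}\ge\|\phi\|_{\infty}^{1-p}=\alpha$.

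For the second inequality, I would set $r:=\frac{p-1}{p-q}$ and note that $r>1$ because $1<q<p$. Writing
\[
\lambda_{1}\!\left(\frac{\lambda}{\lambda_{1}}\right)^{r}\;=\;\lambda^{r}\lambda_{1}^{1-r},\qquad \alpha\!\left(\frac{\lambda}{\alpha}\right)^{r}\;=\;\lambda^{r}\alpha^{1-r},
\]
the claimed inequality reduces to $\lambda_{1}^{1-r}\le\alpha^{1-r}$, equivalently (since $1-r<0$) to $\lambda_{1}\ge\alpha$, which is exactly the first assertion. So the second inequality is just an algebraic consequence of the first together with the fact that $q>1$.

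The only step that requires actual work is the comparison argument giving $\alpha\le\lambda_{1}$; the rest is bookkeeping with exponents. The minor subtlety there is simply to apply the weak comparison principle to $v$ and $u_{1}$ correctly, but this is standard once one observes that $(p-1)$-homogeneity lets us absorb the factor $\lambda_{1}^{1/(p-1)}$ into $\phi$. No further input beyond $\|u_{1}\|_{\infty}=1$ and the definition of $\omega$ is needed.
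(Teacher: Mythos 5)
Your proposal is correct and follows essentially the same route as the paper: the bound $\alpha\leq\lambda_{1}$ is obtained by comparing $u_{1}$ with $\lambda_{1}^{1/(p-1)}\phi$ via the weak comparison principle (using $\omega_{1}u_{1}^{p-1}\leq\omega$ and $\|u_{1}\|_{\infty}=1$), and the second inequality is the same algebraic reduction to $\lambda_{1}^{1-r}\leq\alpha^{1-r}$ with $r=\frac{p-1}{p-q}>1$, which the paper writes equivalently as comparing $(1/\lambda_{1})^{\frac{q-1}{p-q}}$ with $(1/\alpha)^{\frac{q-1}{p-q}}$.
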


\begin{proof}
Since $u_{1}=\phi=0$ on $\partial\Omega$ and%
\[
-\Delta_{p}u_{1}=\lambda_{1}\omega_{1}u_{1}^{p-1}\leq\lambda_{1}\omega
=-\Delta_{p}(\lambda_{1}^{\frac{1}{p-1}}\phi)\text{ \ in \ }\Omega
\]
it follows from the comparison principle applied to $u_{1}$ and $\lambda
_{1}^{\frac{1}{p-1}}\phi$ that%
\[
u_{1}\leq\lambda_{1}^{\frac{1}{p-1}}\phi\text{ \ in\ }\Omega.
\]
Therefore, the passing to maximum values yields
\[
1=\left\Vert u_{1}\right\Vert _{\infty}\leq\lambda_{1}^{\frac{1}{p-1}%
}\left\Vert \phi\right\Vert _{\infty}=\left(  \frac{\lambda_{1}}{\alpha
}\right)  ^{\frac{1}{p-1}}%
\]
implying that $\alpha\leq\lambda_{1}.$ Thus,
\[
\lambda_{1}\left(  \frac{\lambda}{\lambda_{1}}\right)  ^{\frac{p-1}{p-q}%
}=\lambda^{\frac{p-1}{p-q}}\left(  \frac{1}{\lambda_{1}}\right)  ^{\frac
{q-1}{p-q}}\leq\lambda^{\frac{p-1}{p-q}}\left(  \frac{1}{\alpha}\right)
^{\frac{q-1}{p-q}}=\alpha\left(  \frac{\lambda}{\alpha}\right)  ^{\frac
{p-1}{p-q}}.
\]

\vspace{-.8cm}
\end{proof}

\goodbreak

\begin{theorem}
\label{taux}Suppose $a+b=p-1$ and $1<q<p.$ If $\lambda>0$ and $0\leq
\beta<\dfrac{\alpha}{\mu^{b}}$, then $(\ref{twotwo})$ has at least one
positive solution $u\in C^{1,\tau}(\overline{\Omega})$ satisfying the bounds
\begin{equation}
\left(  \dfrac{\lambda}{\lambda_{1}}\right)  ^{\frac{1}{p-q}}u_{1}\leq
u\leq\left(  \dfrac{\lambda}{\alpha-\beta\mu^{b}}\right)  ^{\frac{1}{p-q}%
}\dfrac{\phi}{\left\Vert \phi\right\Vert _{\infty}}. \label{cotaq}%
\end{equation}

\end{theorem}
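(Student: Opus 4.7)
The plan is to apply Theorem \ref{TBoc}, so I need to exhibit an ordered pair $(\underline{u},\overline{u})$ of sub- and super-solution, where $\underline{u}$ and $\overline{u}$ are precisely the two sides of the claimed bound (\ref{cotaq}). This way Theorem \ref{TBoc} delivers a solution $u\in C^{1,\tau}(\overline{\Omega})$ sandwiched between them, giving simultaneously existence, regularity and the bounds.

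\textbf{Sub-solution.} I take $\underline{u} := (\lambda/\lambda_1)^{1/(p-q)}\,u_1$. Since $-\Delta_p$ is $(p-1)$-homogeneous and $u_1$ is the first eigenfunction for $\omega_1$, a direct computation yields
\[
-\Delta_p \underline{u} \;=\; \lambda_1 \omega_1 \underline{u}^{\,p-1}.
\]
Because $\|u_1\|_\infty=1$, we have $\underline{u}^{\,p-q}\leq \lambda/\lambda_1$ in $\Omega$, so $\lambda_1\omega_1\underline{u}^{\,p-1}\leq \lambda\omega_1\underline{u}^{\,q-1}$. The remaining term $\beta\omega_2\underline{u}^{\,a}|\nabla\underline{u}|^b$ is nonnegative, so $\underline{u}$ is a sub-solution.

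\textbf{Super-solution.} I take $\overline{u} := M\,\phi/\|\phi\|_\infty$ with $M:=(\lambda/(\alpha-\beta\mu^b))^{1/(p-q)}$ (the constraint $\beta<\alpha/\mu^b$ makes $M$ well-defined and positive). Using (\ref{pweight}) and the definition $\alpha=\|\phi\|_\infty^{1-p}$,
\[
-\Delta_p \overline{u} \;=\; \Bigl(\frac{M}{\|\phi\|_\infty}\Bigr)^{p-1}\omega \;=\; \alpha M^{p-1}\omega.
\]
Since $0\leq\overline{u}\leq M$ and $|\nabla\overline{u}|\leq M\mu$, and $\omega_1,\omega_2\leq\omega$, the right-hand side of (\ref{twotwo}) at $\overline{u}$ is bounded above by
\[
\lambda\omega M^{q-1}+\beta\omega M^{a}(M\mu)^{b} \;=\; \omega\bigl(\lambda M^{q-1}+\beta\mu^b M^{p-1}\bigr),
\]
where I used $a+b=p-1$. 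The super-solution inequality thus reduces to $M^{p-q}(\alpha-\beta\mu^b)\geq\lambda$, which holds with equality by the choice of $M$.

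\textbf{Ordering.} From the comparison argument inside the proof of Lemma \ref{ltwo} we have $u_1\leq \lambda_1^{1/(p-1)}\phi = (\lambda_1/\alpha)^{1/(p-1)}\phi/\|\phi\|_\infty$. Therefore
\[
\underline{u} \;\leq\; \Bigl(\frac{\lambda}{\lambda_1}\Bigr)^{\!\frac{1}{p-q}}\Bigl(\frac{\lambda_1}{\alpha}\Bigr)^{\!\frac{1}{p-1}} \frac{\phi}{\|\phi\|_\infty}.
\]
The inequality of Lemma \ref{ltwo}, after taking $(p-1)$-th roots, reads $(\lambda/\lambda_1)^{1/(p-q)}\lambda_1^{1/(p-1)}\leq(\lambda/\alpha)^{1/(p-q)}\alpha^{1/(p-1)}$; dividing by $\alpha^{1/(p-1)}$ and using $\alpha-\beta\mu^b\leq\alpha$ (hence $(\lambda/\alpha)^{1/(p-q)}\leq M$) gives $\underline{u}\leq\overline{u}$.

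\textbf{Conclusion.} Hypothesis (H1) for $f(x,u,v)=\lambda\omega_1|u|^{q-2}u+\beta\omega_2|u|^{a-1}u|v|^b$ follows from Young's inequality applied to $|v|^b$ (since $b\leq p-1<p$), with $C(\cdot)$ locally bounded in $|u|$. Theorem \ref{TBoc} then yields a solution $u\in C^{1,\tau}(\overline{\Omega})$ with $\underline{u}\leq u\leq\overline{u}$, i.e.\ (\ref{cotaq}); positivity follows from $\underline{u}=(\lambda/\lambda_1)^{1/(p-q)}u_1>0$ in $\Omega$. The main delicate point is the ordering step: it is tempting to compare $\underline{u}$ with $\overline{u}$ directly via the maximum principle applied to their $p$-Laplacians, but the presence of the mixed weight $\omega=\max\{\omega_1,\omega_2\}$ in $\overline{u}$'s equation makes a pointwise comparison cleaner through the intermediate bound $u_1\leq\lambda_1^{1/(p-1)}\phi$ combined with Lemma \ref{ltwo}.
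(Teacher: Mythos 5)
Your proof is correct and follows essentially the same route as the paper: the same sub-solution $(\lambda/\lambda_{1})^{1/(p-q)}u_{1}$, the same super-solution $M\phi/\Vert\phi\Vert_{\infty}$ with the same choice of $M$, and the ordering obtained from Lemma \ref{ltwo} together with the comparison principle, concluding via Theorem \ref{TBoc}. The only cosmetic difference is in the ordering step, where you reuse the pointwise bound $u_{1}\leq\lambda_{1}^{1/(p-1)}\phi$ from the proof of Lemma \ref{ltwo} and finish by a numerical inequality, whereas the paper applies the comparison principle once more directly to $\underline{u}$ and $\overline{u}$; both arguments rest on the same ingredients.
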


\begin{proof}
We consider $\overline{u}=M\dfrac{\phi}{\|\phi\|_{\infty}}$, where $M=\left(
\dfrac{\lambda}{\alpha-\beta\mu^{b}}\right)  ^{\frac{1}{p-q}}$. The definition
of $M$ yields
\begin{equation}
\label{eqM}\alpha M^{p-1}=\lambda M^{q-1}+\beta\mu^{b}M^{p-1}=\lambda
M^{q-1}+\beta\mu^{b}M^{a+b}.
\end{equation}

We also have%
\[
\left|  \nabla\overline{u}\right|  ^{b}=M^{b}\left(  \dfrac{\left|  \nabla
\phi\right|  }{\left\|  \phi\right\|  _{\infty}}\right)  ^{b}\leq\mu^{b}%
M^{b}.
\]

Thus,%
\begin{align*}
-\Delta_{p}\overline{u}  &  =\alpha M^{p-1}\omega(x)\\
&  \geq\lambda M^{q-1}\omega_{1}(x)+\beta\mu^{b}M^{a+b}\omega_{2}%
(x)\geq\lambda\omega_{1}(x)\overline{u}^{q-1}+\beta\omega_{2}(x)\overline
{u}^{a}\left\vert \nabla\overline{u}\right\vert ^{b}.
\end{align*}
Since $\overline{u}=0$ on $\partial\Omega$, we conclude that $\overline{u}$ is
a positive super-solution of (\ref{twotwo}).

We define $\underline{u}=\varepsilon u_{1}$, where $\varepsilon=\left(
\dfrac{\lambda}{\lambda_{1}}\right)  ^{\frac{1}{p-q}}$. We have
\begin{align*}
-\Delta_{p}\underline{u}  &  =\lambda_{1}\omega_{1}(x)\underline{u}^{p-1}\\
&  =\omega_{1}(x)(\varepsilon u_{1})^{q-1}\left(  \lambda_{1}\varepsilon
^{p-q}\right)  u_{1}^{p-q}\\
&  =\omega_{1}(x)\underline{u}^{q-1}\lambda u_{1}^{p-q}\leq\lambda\omega
_{1}(x)\underline{u}^{q-1}+\beta\omega_{2}(x)\underline{u}^{a}\left\vert
\nabla\underline{u}\right\vert ^{b},
\end{align*}
and since $\underline{u}=0$ in $\partial\Omega$, we conclude that
$\underline{u}$ is a positive sub-solution of (\ref{twotwo}).

By applying Lemma \ref{ltwo} and the comparison principle we obtain the
ordering $\underline{u}\leq\overline{u}$. In fact, we have%
\begin{align*}
-\Delta_{p}\underline{u}  &  =\lambda_{1}\omega_{1}(x)\left(  \frac{\lambda
}{\lambda_{1}}\right)  ^{\frac{p-1}{p-q}}u_{1}^{p-1}\\
&  \leq\lambda_{1}\left(  \frac{\lambda}{\lambda_{1}}\right)  ^{\frac
{p-1}{p-q}}\omega_{1}(x)\leq\alpha\left(  \frac{\lambda}{\alpha}\right)
^{\frac{p-1}{p-q}}\omega(x)\leq\alpha M^{p-1}\omega(x)=-\Delta_{p}\overline
{u},
\end{align*}
since (\ref{eqM}) implies that $\alpha M^{p-1}\geq\lambda M^{q-1}$ and,
therefore, $M\geq(\lambda/\alpha)^{1/(p-q)}$.
\end{proof}

\goodbreak

\begin{corollary}
Suppose $a+b<p-1$ and $1<q<p.$ If $\lambda>0$ and $\beta>0$, then
$(\ref{twotwo})$ has at least one positive solution $u\in C^{1,\tau}%
(\overline{\Omega})$ satisfying the bounds%
\[
\left(  \dfrac{\lambda}{\lambda_{1}}\right)  ^{\frac{1}{p-q}}u_{1}\leq u\leq
M^{\frac{1}{p-q}}\dfrac{\phi}{\left\Vert \phi\right\Vert _{\infty}},
\]
where $M>0$ satisfies the equation $\alpha M^{p-1}=\lambda M^{q-1}+\beta
\mu^{b}M^{a+b}.$
\end{corollary}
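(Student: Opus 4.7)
The plan is to reproduce the argument of Theorem \ref{taux} almost verbatim; the only change is that the constant in the super-solution must now be specified implicitly by a scalar equation, since the hypothesis $a+b<p-1$ prevents us from combining the terms $K^{p-1}$ and $K^{a+b}$ into one and solving in closed form. The sub-solution $\underline{u}=(\lambda/\lambda_{1})^{1/(p-q)}u_{1}$ is kept unchanged, because its verification in the proof of Theorem \ref{taux} uses only the bound $f(x,u,\nabla u)\geq\lambda\omega_{1}(x)u^{q-1}$, which persists.

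For the super-solution I would set $\overline{u}=M\phi/\Vert\phi\Vert_{\infty}$ with $M>0$ to be chosen, and compute $-\Delta_{p}\overline{u}=\alpha M^{p-1}\omega(x)$. Using the three pointwise bounds $\phi/\Vert\phi\Vert_{\infty}\leq 1$, $|\nabla\phi|/\Vert\phi\Vert_{\infty}\leq\mu$, and $\omega\geq\max\{\omega_{1},\omega_{2}\}$, the super-solution requirement reduces to the scalar inequality
\[
\alpha M^{p-1}\geq\lambda M^{q-1}+\beta\mu^{b}M^{a+b}.
\]
Since the exponents $q-1$ and $a+b$ are both strictly smaller than $p-1$, the function $g(M):=\alpha M^{p-1}-\lambda M^{q-1}-\beta\mu^{b}M^{a+b}$ is negative for small $M>0$ and diverges to $+\infty$ as $M\to\infty$; hence it admits a largest positive root, which I take as $M$. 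Then $g(M)=0$ is exactly the equation in the statement, and the displayed inequality holds with equality.

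The ordering $\underline{u}\leq\overline{u}$ then follows by the identical chain used in Theorem \ref{taux}: since $\beta\mu^{b}M^{a+b}\geq 0$, the defining equation of $M$ gives $\alpha M^{p-1}\geq\lambda M^{q-1}$, equivalently $M\geq(\lambda/\alpha)^{1/(p-q)}$, and combining this with Lemma \ref{ltwo} and $u_{1}\leq 1$ yields $-\Delta_{p}\underline{u}\leq\alpha M^{p-1}\omega=-\Delta_{p}\overline{u}$, so the comparison principle applies. Theorem \ref{TBoc} then delivers a solution $u\in C^{1,\tau}(\overline{\Omega})$ with $\underline{u}\leq u\leq\overline{u}$, which is the stated bound. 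The only substantive new step compared with Theorem \ref{taux} is the elementary scalar root-finding for $g$, which is the sole place where the weaker hypothesis $a+b<p-1$ enters; I do not expect any further obstacle.
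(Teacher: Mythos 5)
Your proposal is correct and follows the paper's own proof essentially verbatim: same super-solution $M\phi/\Vert\phi\Vert_{\infty}$ with $M$ a positive root of $\alpha M^{p-1}=\lambda M^{q-1}+\beta\mu^{b}M^{a+b}$, same unchanged sub-solution $(\lambda/\lambda_{1})^{1/(p-q)}u_{1}$, and same ordering argument from $\alpha M^{p-1}\geq\lambda M^{q-1}$ together with Lemma \ref{ltwo}, followed by Theorem \ref{TBoc}. The only cosmetic difference is that the paper notes the root is unique (divide the equation by $M^{p-1}$ and use strict monotonicity of the right-hand side), so your ``largest root'' is that same $M$; note also that what both arguments actually establish is the upper bound $M\phi/\Vert\phi\Vert_{\infty}$.
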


\begin{proof}
The proof above remains valid in this case. In fact, it easy to check that for
any $\lambda>0$ and $\beta>0$ the equation $\alpha M^{p-1}=\lambda
M^{q-1}+\beta\mu^{b}M^{a+b}$ has a unique positive solution $M.$ Hence,
$\overline{u}=M\dfrac{\phi}{\Vert\phi\Vert_{\infty}}$ is a super-solution for
(\ref{twotwo}). Moreover, $\underline{u}=\left(  \dfrac{\lambda}{\lambda_{1}%
}\right)  ^{\frac{1}{p-q}}u_{1}$ is a sub-solution for (\ref{twotwo}) and
$\underline{u}\leq\overline{u}$ since $\alpha M^{p-1}\geq\lambda M^{q-1}.\ $
\end{proof}

We now deal with the Dirichlet problem (\ref{twotwo}) in the case $q=p$ and
$a+b=p-1.$ Our approach considers $q\rightarrow p^{-}.$

\begin{theorem}
\label{mauxt}Suppose $a+b=p-1.$ For each $0\leq\beta<\dfrac{\alpha}{\mu^{b}}$,
there exist $\lambda_{\beta}>0$ and $u_{\beta}\in C^{1,\tau}\left(
\overline{\Omega}\right)  $ such that $0<u_{\beta}\leq1$ in $\Omega,$
$\alpha-\beta\mu^{b}\leq\lambda_{\beta}<\lambda_{1}$ and%
\begin{equation}
\left\{
\begin{array}
[c]{rcll}%
-\Delta_{p}u_{\beta} & = & \lambda_{\beta}\omega_{1}(x)u_{\beta}^{p-1}%
+\beta\omega_{2}(x)u_{\beta}^{a}|\nabla u_{\beta}|^{b} & \text{in }\Omega\\
u_{\beta} & = & 0 & \text{on }\partial\Omega.
\end{array}
\right.  \label{probp}%
\end{equation}

\end{theorem}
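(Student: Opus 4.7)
The plan is to obtain $u_\beta$ as a $C^1$-limit of renormalized solutions of the sublinear problems (with $q\to p^-$) provided by Theorem~\ref{taux}, exploiting the scale invariance that arises from the condition $a+b=p-1$ in the limit.

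For each $q \in (1,p)$ close to $p$, apply Theorem~\ref{taux} with the particular choice $\lambda = \alpha - \beta\mu^{b} > 0$. This yields $v_q \in C^{1,\tau}(\overline{\Omega})$ positive and solving the $q$-subcritical problem, together with the bounds
\[
\varepsilon_q\, u_1 \le v_q \le \frac{\phi}{\|\phi\|_{\infty}}, \qquad \varepsilon_q = \left(\frac{\alpha - \beta\mu^{b}}{\lambda_1}\right)^{\!1/(p-q)}.
\]
Setting $t_q = \|v_q\|_{\infty} \in [\varepsilon_q, 1]$, I normalize by $u_q = v_q / t_q$ so that $\|u_q\|_{\infty} = 1$. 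A direct computation, using $a+b = p-1$ to make the gradient nonlinearity homogeneous of degree $p-1$, shows that $u_q$ satisfies
\[
-\Delta_p u_q = \lambda_q\, \omega_1 u_q^{q-1} + \beta\, \omega_2 u_q^{a} |\nabla u_q|^{b} \ \text{in }\Omega, \qquad u_q = 0 \ \text{on }\partial\Omega,
\]
with $\lambda_q = (\alpha - \beta\mu^{b})/t_q^{p-q}$; the bounds on $t_q$ immediately yield $\alpha - \beta\mu^{b} \le \lambda_q \le \lambda_1$ for every such $q$.

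Because $\|u_q\|_{\infty} = 1$ and the parameters $\lambda_q$, $\beta$ stay bounded, hypothesis (H1) holds uniformly in $q$, and Theorem~\ref{Liebb} gives a uniform $C^{1,\tau}(\overline{\Omega})$ bound on $\{u_q\}$. Extracting a subsequence as $q \to p^-$ produces $u_q \to u_\beta$ in $C^1(\overline{\Omega})$ and $\lambda_q \to \lambda_\beta$. The uniform convergences $u_q^{q-1} \to u_\beta^{p-1}$ and $|\nabla u_q|^{b} \to |\nabla u_\beta|^{b}$ let me pass to the limit in the weak formulation, identifying $u_\beta \in C^{1,\tau}(\overline{\Omega})$ as a solution of~(\ref{probp}) with $\alpha-\beta\mu^b\le\lambda_\beta\le\lambda_1$. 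Since $\|u_\beta\|_{\infty} = 1$ the limit is nontrivial, and the strong maximum principle for the $p$-Laplacian then gives $u_\beta > 0$ in $\Omega$.

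The hard part will be the \emph{strict} inequality $\lambda_\beta < \lambda_1$. At finite $q$ this is reasonable: $\lambda_q = \lambda_1$ is equivalent to $t_q = \varepsilon_q$, which would force $v_q = \varepsilon_q u_1$ at the interior maximum of $u_1$; but $\varepsilon_q u_1$ is a \emph{strict} sub-solution (the inequality $\lambda_1 u_1^{p-1} \le \lambda_1 u_1^{q-1}$ is strict wherever $u_1 < 1$, plus a strictly positive $\beta$-term), so strong comparison rules this out and yields $\lambda_q < \lambda_1$. To propagate strictness to the limit, I plan to apply a Picone-type identity: if $\lambda_\beta = \lambda_1$, then
\[
\lambda_1 \int_{\Omega} \omega_1 u_\beta^{p}\, dx \le \int_{\Omega} |\nabla u_\beta|^{p}\, dx = \lambda_\beta \int_{\Omega} \omega_1 u_\beta^{p}\, dx + \beta \int_{\Omega} \omega_2 u_\beta^{a+1} |\nabla u_\beta|^{b}\, dx,
\]
and the equality case of Picone would force $u_\beta$ to be a positive multiple of $u_1$, contradicting the nonnegative, not identically vanishing $\beta$-term on the right.
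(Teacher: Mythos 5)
Your construction of $u_{\beta}$ is essentially the paper's own argument: normalize the solutions $v_{q}$ of Theorem \ref{taux} to $u_{q}=v_{q}/\|v_{q}\|_{\infty}$, use the bounds (\ref{cotaq}) to get $\alpha-\beta\mu^{b}\leq\lambda_{q}\leq\lambda_{1}$, observe that the right-hand side is bounded by $\Lambda(1+|\nabla u_{q}|^{p})$ uniformly in $q$ so that Theorem \ref{Liebb} gives a $q$-independent $C^{1,\tau}$ bound, and then extract a $C^{1}$-convergent subsequence as $q\to p^{-}$ and pass to the limit in the weak formulation. That part is correct and coincides with the paper.

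The genuine gap is in your argument for the strict inequality $\lambda_{\beta}<\lambda_{1}$. The displayed chain
\[
\lambda_{1}\int_{\Omega}\omega_{1}u_{\beta}^{p}\,dx\;\leq\;\int_{\Omega}|\nabla u_{\beta}|^{p}\,dx\;=\;\lambda_{1}\int_{\Omega}\omega_{1}u_{\beta}^{p}\,dx+\beta\int_{\Omega}\omega_{2}u_{\beta}^{a+1}|\nabla u_{\beta}|^{b}\,dx
\]
contains no contradiction when $\lambda_{\beta}=\lambda_{1}$: the extra term on the right is nonnegative, so the two relations are perfectly consistent, and nothing forces equality in the Rayleigh-quotient inequality. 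Consequently there is no ``equality case of Picone'' available to invoke; testing the equation against $u_{\beta}$ itself simply cannot detect the perturbation $h:=\beta\omega_{2}u_{\beta}^{a}|\nabla u_{\beta}|^{b}+(\lambda_{\beta}-\lambda_{1})\omega_{1}u_{\beta}^{p-1}$. The paper argues differently: assuming $\lambda_{\beta}\geq\lambda_{1}$, it writes $-\Delta_{p}u_{\beta}=\lambda_{1}\omega_{1}u_{\beta}^{p-1}+h$ with $h\geq0$ and applies the Allegretto--Huang consequence of Picone's identity (Lemma 8.1 of \cite{bez}), which in effect uses the pair $(u_{1},u_{\beta})$, i.e.\ the test function $u_{1}^{p}/u_{\beta}^{p-1}$ rather than $u_{\beta}$; this forces $h\equiv0$ and leads to $u_{\beta}\equiv0$, contradicting $\|u_{\beta}\|_{\infty}=1$. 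In addition, your claim that the $\beta$-term is ``not identically vanishing'' is asserted, not proved: for $\beta=0$ it vanishes identically, and even for $\beta>0$ you would have to rule out $\omega_{2}u_{\beta}^{a}|\nabla u_{\beta}|^{b}\equiv0$ (for instance after the rigidity step forces $u_{\beta}$ to be a multiple of $u_{1}$). Finally, the remark about strictness $\lambda_{q}<\lambda_{1}$ at finite $q$ buys nothing, since, as you note, strict inequalities do not survive the limit $q\to p^{-}$; the whole burden is on the limit problem, and there your argument as written does not close.
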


\begin{proof}
For each $0<q<p$ fixed, let us denote by $v_{q}$ the positive solution of
(\ref{twotwo}) given by Theorem \ref{taux}. Thus, multiplying the equation%
\[
-\Delta_{p}v_{q}=\lambda\omega_{1}(x)v_{q}^{q-1}+\beta\omega_{2}(x)v_{q}%
^{a}\left\vert \nabla v_{q}\right\vert ^{b}%
\]
by $\left(  \left\Vert v_{q}\right\Vert _{\infty}\right)  ^{1-p}$, we note
that $u_{q}:=\dfrac{v_{q}}{\left\Vert v_{q}\right\Vert _{\infty}}$ satisfies%
\[
\left\{
\begin{array}
[c]{rcll}%
-\Delta_{p}u_{q} & = & \lambda_{q}\omega_{1}(x)u_{q}^{q-1}+\beta\omega
_{2}(x)u_{q}^{a}|\nabla u_{q}|^{b} & \text{in }\Omega\\
u_{q} & = & 0 & \text{on }\partial\Omega,
\end{array}
\right.
\]
where $\lambda_{q}:=\dfrac{\lambda}{\left\Vert v_{q}\right\Vert _{\infty
}^{p-q}}$. It follows from (\ref{cotaq}) that%
\[
\dfrac{\lambda}{\lambda_{1}}\leq\left\Vert v_{q}\right\Vert _{\infty}%
^{p-q}\leq\dfrac{\lambda}{\alpha-\beta\mu^{b}}%
\]
and hence
\begin{equation}
0<\alpha-\beta\mu^{b}\leq\lambda_{q}\leq\lambda_{1}. \label{cotap}%
\end{equation}

Thus, since $0\leq u_{q}\leq1$ we have that
\begin{align*}
0  &  \leq\lambda_{q}\omega_{1}(x)u_{q}^{p-1}+\beta\omega_{2}(x)u_{q}%
^{a}\left\vert \nabla u_{q}\right\vert ^{b}\\
&  \leq\left\Vert \omega\right\Vert _{\infty}\left(  \lambda_{1}+\dfrac
{\alpha}{\mu^{b}}\left\vert \nabla u_{q}\right\vert ^{b}\right)  \leq
\Lambda\left(  1+\left\vert \nabla u_{q}\right\vert ^{p}\right)  ,
\end{align*}
for some positive constant $\Lambda$ which does not depend on $q$. So, we can
apply Theorem \ref{Liebb} to guarantee that $u_{q}\in C^{1,\tau}\left(
\overline{\Omega}\right)  $ and that $\left\Vert u_{q}\right\Vert _{1,\tau
}\leq C$ for some positive constant $C$ which does not depend on $q$.

Therefore, by taking a sequence $q_{n}\rightarrow p^{-}$, the compactness of
the immersion $C^{1,\tau}\left(  \overline{\Omega}\right)  \hookrightarrow
C^{1}\left(  \overline{\Omega}\right)  $ implies that, passing to a
subsequence, $u_{q_{n}}\rightarrow u_{\beta}$ in $C^{1}\left(  \overline
{\Omega}\right)  $ and $\lambda_{q_{n}}\rightarrow\lambda_{\beta}$. Thus, the
continuity of the operator $-\Delta_{p}^{-1}$ yields that $\lambda_{\beta}$
and $u_{\beta}$ satisfy (\ref{probp}). Moreover, it follows from (\ref{cotap})
that $0<\alpha-\beta\mu^{b}\leq\lambda_{\beta}\leq\lambda_{1}$.

Since $u_{\beta}>0$ we must have $\lambda_{\beta}<\lambda_{1}.$ This follows
from the following fact: if $\lambda\geq\lambda_{1}$, then $u\equiv0$ is the
only nonnegative solution of (\ref{probp}). Indeed, if $u$ is a nonnegative
solution of (\ref{probp}) for some $\lambda\geq\lambda_{1}$, then we can
write
\[
-\Delta_{p}u=\lambda\omega_{1}(x)u^{p-1}+\beta\omega_{2}(x)u^{a}\left\vert
\nabla u\right\vert ^{b}=\lambda_{1}\omega_{1}(x)u^{p-1}+h(x)
\]
where
\[
h(x)=\beta\omega_{2}(x)u^{a}\left\vert \nabla u\right\vert ^{b}+\left(
\lambda-\lambda_{1}\right)  \omega_{1}(x)u^{p-1}\geq0.
\]

Hence, by a consequence of Picone's identity (see \cite{Allegretto} and also
\cite{bez}, Lemma 8.1) we obtain $h\equiv0$ and thus, $u\equiv0$ in $\Omega.$
\end{proof}

\section{The abstract problem}

\label{main} From now on we consider the following problem
\begin{equation}
\left\{
\begin{array}
[c]{rcll}%
-\Delta_{p}u & = & f(x,u,\nabla u) & \text{in }\Omega\\
u & = & 0 & \text{on }\partial\Omega.
\end{array}
\right.  \label{abst2}%
\end{equation}

We begin stating precisely our hypotheses on the nonlinearity $f$. For this,
for a chosen (continuous) weight function $\omega\neq0$, let $\phi\in
C^{1,\tau}(\bar{\Omega})\cap W_{0}^{1,p}(\Omega)$ be the solution of the
problem
\[
\left\{
\begin{array}
[c]{rcll}%
-\Delta_{p}\phi & = & \omega & \text{in }\Omega\\
\phi & = & 0 & \text{on }\partial\Omega.
\end{array}
\right.
\]
As before, we have that $\phi\in C^{1,\tau}(\overline{\Omega})$ and $\phi>0$
in $\Omega$.

Let $\lambda_{1}$ and $u_{1}$ be the first eigenvalue and eigenfunction of the
$p$-Laplacian with weight $\omega,$ that is,
\begin{equation}
\left\{
\begin{array}
[c]{rcll}%
-\Delta_{p}u_{1} & = & \lambda_{1}\omega u_{1}^{p-1} & \text{in }\Omega\\
u_{1} & = & 0 & \text{on }\partial\Omega.
\end{array}
\right.
\end{equation}
We assume that $u_{1}$ is positive and $\Vert u_{1}\Vert_{\infty}=1$.

We set, as in our approach in the previous section,
\begin{equation}
\alpha=\Vert\phi\Vert_{\infty}^{1-p} \label{k1}%
\end{equation}
and
\begin{equation}
\mu=\frac{\Vert\nabla\phi\Vert_{\infty}}{\Vert\phi\Vert_{\infty}}. \label{mu}%
\end{equation}

\begin{remark}
\label{rm3}\textrm{It follows from Lemma \ref{ltwo} that $\alpha\leq
\lambda_{1}.$ However, is not difficult to verify that $\alpha<\lambda_{1}$
(\cite[Section 8]{bez}). }
\end{remark}

We assume that, \emph{besides} (H1), the \emph{continuous} nonlinearity $f$
satisfies, for an arbitrary constant $M>0$,

\begin{enumerate}
\item[(H2)] $\displaystyle{\lim_{u\rightarrow0^{+}}\frac{f(x,u,v)}{u^{p-1}}%
}\geq\lambda_{1}\omega(x),\quad(x,v)\in\overline{\Omega}\times B_{\mu M}%
\quad\text{(uniformly)}$, \vspace{.1cm}

where $B_{\mu M}=\left\{  v\in\mathbb{R}^{N}\,:\,|v|\leq\mu M\right\}  $;

\item[(H3)] $0\leq f(x,u,v)\leq\alpha\omega(x)M^{p-1},\qquad(x,u,v)\in
\overline{\Omega}\times[0,M]\times B_{\mu M}$.
\end{enumerate}

In Section \ref{App}, hypotheses (H2) and (H3) are interpreted in a particular situation.

As mentioned before, hypothesis (H1) might be changed for any hypothesis that
produces a solution of (\ref{abst2}) from an ordered sub- and super-solution
pair of this problem. However, we do believe that adequate arguments of
extension and truncation might produce (H1) from (H3). In fact, if we have a
priori estimates for the gradient of uniformly bounded solutions of
(\ref{abst2}), the hypothesis (H1) is not necessary. For example, in
\cite{BEFZ}, where the case $f(x,u,v)=\omega(x)f(u,|v|)$ and $\Omega=B_{r}$ (a
ball) is handled, all solutions $u$ of (\ref{abst2}) such that $\Vert
u\Vert_{\infty}\leq M$ also satisfy $\Vert\nabla u\Vert_{\infty}\leq\mu M$.

We now state the main result of the paper concerning problem(\ref{abst2}).

\begin{theorem}
\label{maint} Define $u_{\epsilon}=\epsilon u_{1}$. If the nonlinearity $f$
satisfies $(\mathrm{H}1)-(\mathrm{H}3)$, the Dirichlet problem (\ref{abst2})
has at least one positive solution $u\in C^{1,\alpha}(\bar{\Omega})\cap
W_{0}^{1,p}(\Omega)$ satisfying the bounds
\begin{equation}
0<u_{\epsilon}\leq u\leq\frac{M\phi}{\Vert\phi\Vert_{\infty}}\quad\text{in
}\Omega, \label{loc}%
\end{equation}
for all $\epsilon>0$ sufficiently small.
\end{theorem}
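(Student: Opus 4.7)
My plan is to apply the sub- and super-solution Theorem \ref{TBoc}: construct a super-solution $\overline{u}$ from (H3), a sub-solution $u_\epsilon$ from (H2) for $\epsilon$ small, verify they are ordered, and invoke Theorem \ref{TBoc} to extract a solution lying between them.

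For the super-solution, I would set $\overline{u} = M\phi/\|\phi\|_\infty$. The torsional creep equation for $\phi$ and the definition $\alpha = \|\phi\|_\infty^{1-p}$ yield $-\Delta_p \overline{u} = \alpha M^{p-1}\omega(x)$. Moreover $0 \leq \overline{u} \leq M$ and $|\nabla \overline{u}| = M\|\nabla\phi\|_\infty/\|\phi\|_\infty \leq \mu M$, so the triple $(x,\overline{u},\nabla\overline{u})$ lies in the region where (H3) applies, yielding $f(x,\overline{u},\nabla\overline{u}) \leq \alpha M^{p-1}\omega(x) = -\Delta_p \overline{u}$ together with $\overline{u}|_{\partial\Omega}=0$. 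Hence $\overline{u}$ is a super-solution.

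For the sub-solution, I take $u_\epsilon = \epsilon u_1$, so that $-\Delta_p u_\epsilon = \lambda_1 \omega(x) u_\epsilon^{p-1}$. Provided $\epsilon$ is small enough that (i) $\epsilon$ lies below the threshold at which (H2) supplies the pointwise inequality $f(x,u,v) \geq \lambda_1\omega(x)u^{p-1}$, and (ii) $\epsilon\|\nabla u_1\|_\infty \leq \mu M$ so that $\nabla u_\epsilon \in B_{\mu M}$ as required by (H2), we obtain $f(x,u_\epsilon,\nabla u_\epsilon) \geq -\Delta_p u_\epsilon$, so $u_\epsilon$ is a positive sub-solution. For the ordering, the comparison argument already used in Lemma \ref{ltwo} yields $u_1 \leq \lambda_1^{1/(p-1)}\phi$, and therefore $u_\epsilon \leq \epsilon\lambda_1^{1/(p-1)}\phi \leq M\phi/\|\phi\|_\infty = \overline{u}$ whenever $\epsilon \leq M(\alpha/\lambda_1)^{1/(p-1)}$. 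Since $\alpha < \lambda_1$ by Remark \ref{rm3}, this constraint is compatible with (i)--(ii): taking $\epsilon$ smaller than the minimum of the three thresholds suffices.

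Having produced an ordered pair of sub- and super-solution, Theorem \ref{TBoc} yields a solution $u \in C^{1,\tau}(\overline{\Omega})$ with $u_\epsilon \leq u \leq \overline{u}$; strict positivity $u>0$ in $\Omega$ follows from $u_\epsilon>0$ in $\Omega$, and the bound (\ref{loc}) persists for every smaller $\epsilon'>0$ since $u_{\epsilon'} \leq u_\epsilon \leq u$. The main subtlety is the reading of (H2): to make $u_\epsilon$ a genuine sub-solution one must interpret the uniform lower limit as delivering a pointwise inequality $f(x,u,v) \geq \lambda_1\omega(x)u^{p-1}$ on some strip $(0,\delta]\times B_{\mu M}$. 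Granted this (and the underlying Boccardo--Murat--Puel/Lieberman machinery of Theorem \ref{TBoc}), the remainder of the argument reduces to the algebra of choosing $\epsilon$ small enough.
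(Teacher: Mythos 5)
Your proposal is correct and follows essentially the same route as the paper: the same super-solution $M\phi/\Vert\phi\Vert_{\infty}$ built from (H3), the same sub-solution $\epsilon u_{1}$ built from (H2) with the same smallness thresholds, and Theorem \ref{TBoc} to extract the solution between them. The only deviation is the ordering step: the paper obtains $u_{\epsilon}\leq U$ by observing that for $\epsilon<M$ the triple $(x,u_{\epsilon},\nabla u_{\epsilon})$ lies in the (H3)-box, so $-\Delta_{p}u_{\epsilon}\leq f(x,u_{\epsilon},\nabla u_{\epsilon})\leq\alpha M^{p-1}\omega=-\Delta_{p}U$ and the comparison principle applies, while you instead scale the comparison $u_{1}\leq\lambda_{1}^{1/(p-1)}\phi$ and impose $\epsilon\leq M(\alpha/\lambda_{1})^{1/(p-1)}$ --- both are valid uses of the weak comparison principle and lead to the same conclusion.
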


We prove this theorem in a sequence of simple results.

\begin{proposition}
The function $U:=\frac{M\phi}{\Vert\phi\Vert_{\infty}}\in C^{1,\tau}%
(\bar{\Omega})\cap W_{0}^{1,p}(\Omega)$ is a super-solution for the problem
(\ref{abst2}).
\end{proposition}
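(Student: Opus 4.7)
The plan is to compute $-\Delta_{p}U$ explicitly, verify that the pair $(U(x),\nabla U(x))$ lies in the region where hypothesis (H3) applies, and then invoke (H3) to get the super-solution inequality. Since $U=c\phi$ with $c=M/\|\phi\|_{\infty}$, the $(p-1)$-homogeneity of $-\Delta_{p}$ gives
\[
-\Delta_{p}U \;=\; c^{p-1}(-\Delta_{p}\phi) \;=\; \frac{M^{p-1}}{\|\phi\|_{\infty}^{p-1}}\,\omega \;=\; \alpha\, M^{p-1}\,\omega(x),
\]
where the last equality uses the definition $\alpha=\|\phi\|_{\infty}^{1-p}$ from \eqref{k1}.

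Next I would check the pointwise bounds needed to apply (H3). Since $0\le \phi\le \|\phi\|_{\infty}$ in $\Omega$, we immediately get $0\le U(x)\le M$ for all $x\in\overline{\Omega}$. For the gradient, the definition $\mu=\|\nabla\phi\|_{\infty}/\|\phi\|_{\infty}$ in \eqref{mu} yields
\[
|\nabla U(x)| \;=\; \frac{M\,|\nabla\phi(x)|}{\|\phi\|_{\infty}} \;\le\; \frac{M\,\|\nabla\phi\|_{\infty}}{\|\phi\|_{\infty}} \;=\; \mu M,
\]
so $\nabla U(x)\in B_{\mu M}$. Therefore $(x,U(x),\nabla U(x))\in \overline{\Omega}\times[0,M]\times B_{\mu M}$, and (H3) gives
\[
f(x,U,\nabla U) \;\le\; \alpha\, \omega(x)\, M^{p-1} \;=\; -\Delta_{p}U \quad \text{in }\Omega.
\]

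Finally, since $\phi=0$ on $\partial\Omega$ we have $U=0$ on $\partial\Omega$, so the boundary condition required of a super-solution ($U\ge 0$ on $\partial\Omega$) is satisfied with equality. Combined with the regularity $\phi\in C^{1,\tau}(\overline{\Omega})\cap W_{0}^{1,p}(\Omega)$, this shows $U\in C^{1,\tau}(\overline{\Omega})\cap W_{0}^{1,p}(\Omega)$ is a super-solution of \eqref{abst2}. There is no real obstacle here: the entire argument is a bookkeeping exercise that is precisely engineered by the choice of the constants $\alpha$ and $\mu$, which is exactly why hypothesis (H3) was formulated with the multiple $\alpha\omega(x)M^{p-1}$ and the ball $B_{\mu M}$.
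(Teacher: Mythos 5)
Your proof is correct and follows essentially the same route as the paper: compute $-\Delta_{p}U=\alpha M^{p-1}\omega$ by homogeneity, check $0\le U\le M$ and $|\nabla U|\le\mu M$ so that (H3) applies, and conclude with the boundary condition. Nothing is missing.
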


\begin{proof}
Of course, we have $0\leq U\leq M$ and $0\leq|\nabla U|=\frac{M|\nabla\phi
|}{\|\phi\|_{\infty}}\leq\mu M$. Thus, it follows from (H3) that $\alpha
M^{p-1}\omega\geq f(x,U,\nabla U)$ and
\[
-\Delta_{p}U=-\Delta_{p}\left(  \frac{M\phi}{\|\phi\|_{\infty}}\right)
=\alpha M^{p-1}\omega\geq f(x,U,\nabla U).
\]

Since $U=0$ on $\partial\Omega$, we are done.
\end{proof}

\begin{proposition}
Define $u_{\epsilon}=\epsilon u_{1}$ for $\epsilon>0$. Then, for $\epsilon$
sufficiently small, $u_{\epsilon}$ is a sub-solution for problem (\ref{abst2}).
\end{proposition}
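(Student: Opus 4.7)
The plan is to verify the weak inequality $-\Delta_{p}u_{\epsilon}\leq f(x,u_{\epsilon},\nabla u_{\epsilon})$ in $\Omega$ pointwise, noting that the boundary condition $u_{\epsilon}=0$ on $\partial\Omega$ is automatic from $u_{1}=0$ on $\partial\Omega$. First I would exploit the $(p-1)$-homogeneity of the $p$-Laplacian together with the eigenvalue equation for $u_{1}$ to compute
\[
-\Delta_{p}u_{\epsilon}=-\Delta_{p}(\epsilon u_{1})=\epsilon^{p-1}(-\Delta_{p}u_{1})=\epsilon^{p-1}\lambda_{1}\omega(x)u_{1}^{p-1}=\lambda_{1}\omega(x)u_{\epsilon}^{p-1}.
\]
Thus the task reduces to establishing $\lambda_{1}\omega(x)u_{\epsilon}^{p-1}\leq f(x,u_{\epsilon}(x),\nabla u_{\epsilon}(x))$ at each $x\in\Omega$, for $\epsilon$ sufficiently small.

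Next I would use the $C^{1,\tau}(\overline{\Omega})$-regularity of $u_{1}$ to pin down the range of the pair $(u_{\epsilon},\nabla u_{\epsilon})$. Since $\|u_{1}\|_{\infty}=1$, one has $0\leq u_{\epsilon}(x)\leq\epsilon$, while $|\nabla u_{\epsilon}(x)|\leq\epsilon\|\nabla u_{1}\|_{\infty}$. Choosing $\epsilon>0$ small enough that $\epsilon\|\nabla u_{1}\|_{\infty}\leq\mu M$ guarantees $\nabla u_{\epsilon}(x)\in B_{\mu M}$ for every $x\in\overline{\Omega}$, which is precisely the set on which (H2) is assumed to hold uniformly. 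At this point I would invoke the uniform lower limit condition in (H2): as $\epsilon\to 0^{+}$, the first argument $u_{\epsilon}(x)\to 0$ uniformly in $x\in\overline{\Omega}$, so the uniform statement of (H2) applied with the restricted gradient set yields $f(x,u_{\epsilon},\nabla u_{\epsilon})\geq\lambda_{1}\omega(x)u_{\epsilon}^{p-1}$ in $\Omega$ for all sufficiently small $\epsilon$. Combined with the computation above, this is exactly the sub-solution inequality.

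The delicate point—and the main obstacle I expect—is the \emph{uniformity} in (H2). Without the $(x,v)$-uniform formulation, the threshold $\epsilon_{0}(x)$ below which the inequality holds could degenerate, in particular near $\partial\Omega$ where $u_{1}$ vanishes. The assumption that the limit inferior is attained uniformly on $\overline{\Omega}\times B_{\mu M}$ is what converts the asymptotic statement into a genuine pointwise bound on the compact set where $(u_{\epsilon},\nabla u_{\epsilon})$ lives. I would present (H2) here as providing, in the standard sublinear reading, a strict margin $f(x,u,v)\geq(\lambda_{1}+\delta)\omega(x)u^{p-1}$ for small $u$ uniformly in $(x,v)\in\overline{\Omega}\times B_{\mu M}$, so that the comparison with $-\Delta_{p}u_{\epsilon}=\lambda_{1}\omega u_{\epsilon}^{p-1}$ is cleanly satisfied. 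All other steps (the regularity of $u_{1}$, the homogeneity computation, the verification of the boundary trace) are routine.
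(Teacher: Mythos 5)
Your argument is correct and coincides with the paper's own proof: both compute $-\Delta_{p}u_{\epsilon}=\lambda_{1}\omega u_{\epsilon}^{p-1}$ by homogeneity, restrict $\epsilon$ so that $|\nabla u_{\epsilon}|\leq\epsilon\Vert\nabla u_{1}\Vert_{\infty}\leq\mu M$, and then invoke the uniform statement of (H2) to obtain $f(x,u_{\epsilon},\nabla u_{\epsilon})\geq\lambda_{1}\omega(x)u_{\epsilon}^{p-1}$ for all $\epsilon\leq\min\{\epsilon_{0},\mu M/\Vert\nabla u_{1}\Vert_{\infty}\}$. Your closing remark on the role of uniformity in (H2) is exactly the point the paper relies on implicitly, so no gap remains.
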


\begin{proof}
For all $0<\epsilon\leq{\frac{\mu M}{\|\nabla u_{1}\|_{\infty}}}$ we have
\begin{equation}
\label{equa1}0\leq u_{\epsilon}=\epsilon u_{1}\leq\epsilon\|u_{1}\|_{\infty
}=\epsilon
\end{equation}
and
\begin{equation}
\label{equa2}0\leq|\nabla u_{\epsilon}|=\epsilon|\nabla u_{1}|\leq
\epsilon\|\nabla u_{1}\|_{\infty}\leq\mu M.
\end{equation}

Now, it follows from (H2) the existence of $\epsilon_{0}>0$ such that
\[
f(x,u,v)\geq\lambda_{1}\omega(x)u^{p-1}\quad\text{for all }\ 0\leq
u\leq\epsilon_{0}\quad\text{and }\ (x,v)\in\overline{\Omega}\times B_{\mu M}%
\]
In particular, if $0<\epsilon\leq\min\left\{  \epsilon_{0},\displaystyle{\frac
{\mu M}{\Vert\nabla u_{1}\Vert_{\infty}}}\right\}  $, then
\[
f(x,u_{\epsilon},\nabla u_{\epsilon})\geq\lambda_{1}\omega(x)u_{\epsilon
}^{p-1}\quad\text{for all }
%\ 0\leq u\leq\epsilon\leq\epsilon_0\quad\text{and }\
x\in\overline{\Omega},
\]
that is,%
\begin{equation}
-\Delta_{p}u_{\epsilon}=\lambda_{1}\omega(x)u_{\epsilon}^{p-1}\leq f\left(
x,u_{\epsilon},\nabla u_{\epsilon}\right)  \quad\text{in }\Omega. \label{aux}%
\end{equation}

Since $u_{\epsilon}=0$ on $\partial\Omega$, $u_{\epsilon}$ is a sub-solution
for (\ref{abst2}).
\end{proof}

\noindent\textbf{Proof of the Theorem.} It follows from Theorem \ref{TBoc}
that we only need to verify that $u_{\epsilon}\leq U$ for $\epsilon>0$
sufficiently small.

Taking $\epsilon<\min\left\{  \epsilon_{0},M,\displaystyle\frac{\mu M}%
{\Vert\nabla u_{1}\Vert_{\infty}}\right\}  $, we have
\[
-\Delta_{p}u_{\epsilon}\leq f(x,u_{\epsilon},\nabla u_{\epsilon})\leq\alpha
M^{p-1}\omega(x)=-\Delta_{p}\left(  \frac{M\phi}{\Vert\phi\Vert_{\infty}%
}\right)  =-\Delta_{p}U\quad\text{in }\Omega.
\]
The first inequality follows from (\ref{aux}), while the second inequality
follows from (H3) by applying (\ref{equa1}) and (\ref{equa2}). Since
$u_{\epsilon}=0=U$ on $\partial\Omega$, $u_{\epsilon}\leq U$ is a consequence
of the comparison principle. $\hfill\Box$\vspace{0.5cm}

\section{Applications}

\label{App} An abstract example of a nonlinearity $f$ satisfying our
hypotheses is given by
\[
f(x,u,v)=\omega(x)g\left(  u,|v|\right)  ,
\]
where $\omega$ is a continuous weight function defined in $\Omega$ and
$g(u,t)$ is a continuous function satisfying
\begin{equation}
g(u,t)\leq C(|u|)(1+t^{p})\quad\text{for all }\ (u,t)\in\mathbb{R}%
\times(0,\infty). \tag{H3}%
\end{equation}
and also
\begin{equation}
g(u,t)\geq\lambda_{1}u^{p-1},\quad\text{for all }\ (u,t)\in\lbrack
0,\epsilon]\times\lbrack0,\mu M] \label{g1}%
\end{equation}
for some $\epsilon>0$,
\begin{equation}
0\leq g(u,t)\leq\alpha M^{p-1},\quad\text{for all }\ (u,t)\in\lbrack
0,M]\times\lbrack0,\mu M] \label{g2}%
\end{equation}
for some $M>0$. Note that (\ref{g1}) and (\ref{g2}) are hypotheses (H1) and
(H2) for this particular, abstract example.

Geometrically, the 2-variable function $g(u,t)$ has its graph passing through
a ``rectangular box with a small step'' in its floor, formed by the surfaces
\[
\left\{
\begin{array}
[c]{l}%
z=\lambda_{1}u^{p-1},\\
0\leq u\leq\epsilon,\\
0\leq t\leq\mu M
\end{array}
\right.  \quad\text{and}\quad\left\{
\begin{array}
[c]{l}%
u=\epsilon,\\
0\leq z\leq\lambda_{1}\epsilon^{p-1},\\
0\leq t\leq\mu M.
\end{array}
\right.
\]

Figure 1 illustrates such a box for the case $p=2$ and Figure 2 shows the
region obtained by sectioning it by the plane $t\equiv c$ for $c\in[0,\mu M].$

\begin{figure}[h]
\begin{center}
{\includegraphics[width=5cm]{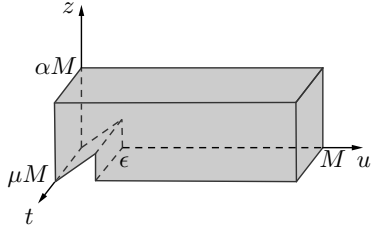}}
\end{center}
\par
\vspace{-.5cm}\caption{For $c\in[0,\mu M]$, the graph of $g(u,c)$ passes
through a ``box with a small step'' in its floor. (Here $p=2$.)}%
\end{figure}

\begin{figure}[th]
\begin{center}
{\includegraphics[width=5cm]{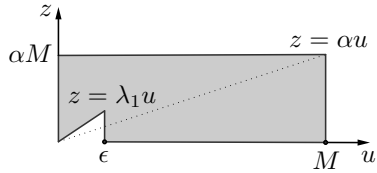}}
\end{center}
\par
\vspace{-.5cm}\caption{The (orthogonal) projection of the graph of $g(u,c)$ on
the $uz$ plane passes through the gray area. (Here $p=2$).}%
\end{figure}

It is noteworthy that the box can be made sufficiently large by increasing $M$
and its step can be made sufficiently small by decreasing $\epsilon$.
Moreover, $g(u,t)$ could be zero at several values in this box.

From the proofs presented above we can see that, once fixed the weight
$\omega$, if the graph of $g(u,t)$ passes through such a box, i.e., if $g$
satisfies (\ref{g1}) and (\ref{g2}) and a growth condition like (H1), then the
Dirichlet problem
\[
\left\{
\begin{array}
[c]{rcll}%
-\Delta_{p}u & = & \omega(x)g(u,|\nabla u|), & \text{in }\Omega\\
u & = & 0, & \text{on }\partial\Omega,
\end{array}
\right.
\]
has a positive solution $u$ bounded by two functions determined only by
$\omega$ and $\Omega$.
%=\alpha_{1}\left(\Omega,\omega\right)  $ and $\mu=\mu\left(  \Omega,\omega\right)  .$

\begin{example}
\label{ex1} \textrm{We consider the problem
\begin{equation}
\label{p2}\left\{
\begin{array}
[c]{rcll}%
-\Delta_{p}u & = & \lambda\omega(x)u^{q-1}\left(  1+|\nabla u|^{p}\right)  &
\text{in }\Omega\\
u & = & 0 & \text{on }\partial\Omega,
\end{array}
\right.
\end{equation}
where $\omega$ is a positive weight function, $1<q<p$ and $\lambda\in
(0,\infty)$. We will show that there exists $\lambda_{*}$ (to be specified in
the sequence) so that problem $(\ref{p2})$ has a positive solution for all
$\lambda\in(0,\lambda_{*}]$. }

\textrm{The nonlinearity $f(x,u,v)=\lambda\omega(x)u^{q-1}\left(
1+|v|^{p}\right)  $ satisfies $(\mathrm{H}1)$ for all $\lambda$. Moreover, it
satisfies $(\mathrm{H}2)$ uniformly for all $v\in\mathbb{R}^{N}$, since
\[
\lim_{u\to0^{+}}\frac{\lambda\omega(x)u^{q-1}\left(  1+|v|^{p}\right)
}{u^{p-1}}\geq\lim_{u\to0^{+}}\frac{\lambda\omega(x)u^{q-1}}{u^{p-1}}\geq
\frac{\lambda\inf_{\Omega}\omega}{u^{p-q}}=\infty,
\]
}

\textrm{In order to satisfy $(\mathrm{H}3)$, we must have
\[
\lambda M^{q-1}(1+(\mu M)^{p})\leq\alpha M^{p-1}.
\]
}

\textrm{So, defining the function $H\colon[0,\infty)\to[0,\infty]$ by
$H(M)=M^{q-p}(1+\mu^{p} M^{p})$, the last inequality is equivalent to
\[
H(M)\leq\frac{\alpha}{\lambda}.
\]
We have
\[
\lim_{M\to0^{+}}H(M)=\infty=\lim_{M\to\infty}H(M),
\]
and the function $H$ has a unique critical point $M_{*}$, given by
\[
\mu^{p} M^{p}_{*}=\frac{p}{q}-1,
\]
where $H$ assumes its minimum value
\[
H(M_{*})=M^{q-p}(1+\mu^{p}M_{*}^{p})=\frac{1}{\mu^{q-p}}\left(  \frac{p}%
{q}-1\right)  ^{\frac{q-p}{p}}\left(  \frac{p}{q}\right)  .
\]
}

\textrm{By setting
\[
\lambda_{*}=\frac{\alpha}{\mu^{p-q}}\left(  \frac{p}{q}-1\right)  ^{\frac
{p-q}{p}}\left(  \frac{q}{p}\right)  ,
\]
we obtain
\[
H(M_{*})=\frac{\alpha}{\lambda_{*}}.
\]
}

\textrm{So, condition $(\mathrm{H}3)$ is satisfied by the nonlinearity
$\lambda\omega(x)u^{q-1}\left(  1+|v|^{p}\right)  $ if $\frac{\alpha}%
{\lambda_{*}}\leq\frac{\alpha}{\lambda}$, that is,
\[
0<\lambda\leq\lambda^{*}.
\]
}

\textrm{For a fixed $\lambda\in(0,\lambda_{*}]$, in order to obtain estimates
for the solution $u_{\lambda}$ of $(\ref{p2})$, we define $\varepsilon
:=\left(  \frac{\lambda}{\lambda_{1}}\right)  ^{\frac{1}{p-q}}$, and note that
$\underline{u}_{\lambda}:=\varepsilon u_{1}$ is a sub-solution of this
problem:
\begin{align*}
-\Delta_{p}\underline{u}_{\lambda}  &  =\lambda_{1}\underline{u}_{\lambda
}^{p-1}\omega\\
&  =\lambda_{1}\underline{u}_{\lambda}^{q-1}\omega\underline{u}_{\lambda
}^{p-q}\leq\varepsilon^{p-q}\lambda_{1}\underline{u}_{\lambda}^{q-1}%
\omega=\lambda\underline{u}_{\lambda}^{q-1}\omega\leq\lambda\underline
{u}_{\lambda}^{q-1}\omega\left(  1+\left|  \nabla\underline{u}_{\lambda
}\right|  ^{p}\right)
\end{align*}
($u_{1}$ denotes, as before, the positive solution of $-\Delta_{p}%
u_{1}=\lambda_{1}u^{p-1}\omega$, with $\|u_{1}\|_{\infty}=1$.) }

\textrm{The sub-solution $\underline{u}_{\lambda}$ and the super-solution
$U=M_{*}\phi/\|\phi\|_{\infty}$ (given by Theorem $\ref{maint}$) are ordered,
if we choose $\varepsilon$ such that $\lambda_{1}\varepsilon^{p-1}\leq\alpha
M_{*}^{p-1}$. In fact, follows from the comparison principle that
\[
-\Delta_{p}\underline{u}_{\lambda}=\lambda_{1}(\varepsilon u_{1})^{p-1}%
\omega\leq\lambda_{1}\varepsilon^{p-1}\omega\leq\alpha M_{*}^{p-1}%
\omega=-\Delta_{p}U.
\]
From the bounds $\|u_{\epsilon}\|_{\infty}<\|u_{\lambda}\|_{\infty}%
\leq\|U\|_{\infty}$, we conclude that
\[
\left(  \frac{\lambda}{\lambda_{1}}\right)  ^{\frac{1}{p-q}}\leq\left\|
u_{\lambda}\right\|  _{\infty}\leq\frac{1}{\mu}\left(  \frac{p}{q}-1\right)
^{\frac{1}{p}}.
\]
}
\end{example}

\begin{example}
\label{ex2} \textrm{The problem
\begin{equation}
\left\{
\begin{array}
[c]{rcll}%
-\Delta_{p} u & = & \lambda f(x,u)+|\nabla u|^{p}, & \text{in }\ \Omega\\
u & = & 0, & \text{on }\ \partial\Omega,
\end{array}
\right. \label{dir}%
\end{equation}
where $\lambda>0$ is a parameter and $f(x,u)$ is a Carath\'{e}odory function
satisfying%
\begin{equation}
c_{0}u^{q-1}\leq f(x,u)\leq c_{1}u^{q-1},\quad\text{for all}\ (x,t)\in
\overline{\Omega}\times[0,\infty)\label{between}%
\end{equation}
for positive constants $c_{0}$ and $c_{1}$ was treated in $\cite{ILS}$ for the
cases $q>p$, $q=p$ and $1<q<p$. }

\textrm{In the case $1<q<p$, they proved the existence of a positive value
$\overline{\Lambda}$ such that the problem has at least two positive solutions
if $0<\lambda<\overline{\Lambda}$, at least one positive solution if
$\lambda=\overline{\Lambda}$ and no positive solution if $\lambda
>\overline{\Lambda}$, a result the resembles one of the first steps in the
study of the classic Ambrosetti-Prodi problem. }

\textrm{In that paper, by making the change of variable $w=e^{\frac{u}{p-1}%
}-1$, problem $(\ref{dir})$ was transformed into another problem, whose
nonlinearity $h(x,w)$ does not depend on the gradient of $w$. Then, the
existence of solution was obtained by applying the sub- and super-solution
method to the transformed problem. (See also the variational approach for this
problem in \cite{ILU}.) }

\textrm{A direct application of Theorem $\ref{maint}$ gives the existence of
(at least) one positive solution for
\[
0<\lambda\leq\lambda_{*}:=\frac{1}{c_{1}}\left(  \frac{p-q}{\mu^{p}}\right)
^{p-q}\left(  \frac{\alpha}{p-q+1}\right)  ^{p-q}.
\]
The details are very similar to that of Example $\ref{ex1}$. }
\end{example}

\end{document}